\numberwithin{equation}{section}
\numberwithin{equation}{section}
\newtheorem{defi}{Definition}[section]
\newtheorem{theorem}[defi]{Theorem}
\newtheorem{corollary}[defi]{Corollary}
\newtheorem{remark}[defi]{Remark}
\newcommand{\cH}{{\mathcal H}}
\newcommand{\cB}{{\mathcal B}}
\newcommand{\EE}{{\mathbb E}}
\newcommand{\NN}{{\mathbb N}}
\newcommand{\R}{{\mathbb R}}
\newcommand{\RR}{{\mathbb R}}
\renewcommand{\epsilon}{\varepsilon}
\newcommand{\nn}{\nonumber}
\begin{document}

\title[Addendum]
{Addendum to the Paper\\
"On Quasilinear Parabolic Evolution Equations in Weighted $L_p$-Spaces II"}

\author{Jan Pr\"uss}
\address{Martin-Luther-Universit\"at Halle-Witten\-berg\\
         Institut f\"ur Mathematik \\
         Theodor-Lieser-Strasse 5\\
         06120 Halle, Germany}
\email{jan.pruess@mathematik.uni-halle.de}

\author{Mathias Wilke}
\address{Universit\"at Regensburg, Fakult\"at f\"ur Mathematik, 93040 Regensburg, Germany}

\email{mathias.wilke@ur.de}

\maketitle

\noindent
This note is devoted to a small, but essential, extension of Theorem 2.1 of our recent paper \cite{LPW14}. The improvement is explained in Section 1 and proved in Section 2. The importance of the extension is demonstrated in Section 3 with an application to the Navier-Stokes system in critical $L_q$-spaces.

\section{The Improvement}
Let $X_0,X_1$ be Banach spaces such that $X_1$ embeds densely in $X_0$, let $p\in(1,\infty)$ and  $1/p<\mu\leq1$.
As in \cite{LPW14}, we consider the following quasilinear parabolic evolution equation
\begin{equation}\label{qpp}
\dot{u} +A(u)u = F_1(u)+F_2(u),\; t>0,\quad u(0)=u_1.
\end{equation}
The space of initial data will be the real interpolation space $X_{\gamma,\mu} =(X_0,X_1)_{\mu-1/p,p}$, and the state space of the problem is $X_\gamma=X_{\gamma,1}$. Let $ V_\mu\subset X_{\gamma,\mu}$ be open and $u_1\in V_\mu$. Furthermore, let $X_\beta=(X_0,X_1)_\beta$ denote  the \emph{complex interpolation spaces}. We will impose the following assumptions.

\medskip

\noindent
{\bf (H1)} $(A,F_1)\in C^{1-}(V_\mu; \cB(X_1,X_0)\times X_0)$.

\medskip

\noindent
{\bf (H2)} $F_2: V_\mu\cap X_\beta \to X_0$ satisfies the estimate
$$ |F_2(u_1)-F_2(u_2)|_{X_0} \leq C \sum_{j=1}^m (1+|u_1|_{X_\beta}^{\rho_j}+|u_2|_{X_\beta}^{\rho_j})|u_1-u_2|_{X_{\beta_j}},$$
for some numbers $m\in\NN$, $\rho_j\geq0$, $\beta\in(\mu-1/p,1)$, $\beta_j\in[\mu-1/p, \beta]$, where $C$ denotes a constant which may depend on $|u_i|_{X_{\gamma,\mu}}$.

\medskip

\noindent
{\bf (H3)} For all $j=1,\ldots,m$ we have
$$ \rho_j( \beta-(\mu-1/p)) + (\beta_j -(\mu-1/p)) \leq 1 -(\mu-1/p).$$

\noindent
Allowing for equality in {\bf (H3)} is not for free, there is no free lunch, in particular not in mathematics. We have to impose additionally the following structural {\bf Condition (S)} on the Banach spaces $X_0$ and $X_1$.

\medskip

\noindent
{\bf (S)} The space $X_0$ is of class UMD. The embedding
$$ {H}^{1}_p(\RR;X_0)\cap L_{p}(\RR;X_1)\hookrightarrow {H}^{1-\beta}_{p}(\RR;X_\beta),$$
is valid for each $\beta\in (0,1)$, $p\in(1,\infty)$.

\begin{remark}\mbox{}
\begin{enumerate}
\item By the {\em Mixed Derivative Theorem}, Condition {\bf (S)} is valid if $X_0$ is of class UMD, and if there is an operator $A_\#\in \cH^\infty(X_0)$, with domain
${\sf D}(A_\#)=X_1$, and $\cH^\infty$-angle $\phi_{A_\#}^\infty<\pi/2$. We refer to Pr\"uss and Simonett \cite{PrSi16}, Section 4.5.
\item The assumption that $X_0$ is a UMD space in Condition {\bf (S)} cannot be skipped, since the maximal domain of $\left(\frac{d}{dt}\right)^\alpha$ in $L_p(\R;X_0)$ is given by
$H_p^\alpha(\R;X_0)$ if $X_0$ is a UMD space.
\item The Condition {\bf (S)} implies the embedding
$${_0\mathbb{E}}_{1,\mu}(0,T):={_0H}^1_{p,\mu}((0,T);X_0)\cap L_{p,\mu}((0,T); X_1)\hookrightarrow {_0H}_{p,\mu}^{1-\beta}((0,T);X_\beta).$$
Indeed, if $u\in {_0\mathbb{E}}_{1,\mu}(0,T)$, then we first extend $u$ to a function $\tilde{u}\in {_0\mathbb{E}}_{1,\mu}(\R_+)$ by \cite[Lemma 2.5]{MeySchn12}. This in turn is equivalent to the fact 
$$[t\mapsto t^{1-\mu}\tilde{u}(t)]\in {_0H}^1_{p}(\R_+;X_0)\cap L_{p}(\R_+; X_1).$$
In a next step we extend $v(t):=t^{1-\mu}\tilde{u}(t)$ by zero to $\R_-$ to obtain 
$$\tilde{v}\in {H}^1_{p}(\R;X_0)\cap L_{p}(\R; X_1).$$
By Condition {\bf (S)} it follows that $\tilde{v}\in {H}^{1-\beta}_{p}(\RR;X_\beta)$, hence $v\in {_0H}^{1-\beta}_{p}(\RR_+;X_\beta)$ and therefore $u\in {_0H}^{1-\beta}_{p,\mu}((0,T);X_\beta)$.
\end{enumerate}
\end{remark}

\noindent
The announced extension of Theorem 2.1 of \cite{LPW14} is the following result.

\begin{theorem}\label{main} Suppose that the structural assumption {\bf (S)} holds, and assume that hypotheses {\bf (H1), (H2), (H3)} are valid. Fix any $u_0\in V_\mu$ such that $A_0:= A(u_0)$ has maximal $L_p$-regularity. Then there  is $T=T(u_0)>0$ and $\varepsilon =\varepsilon(u_0)>0$ with $\bar{B}_{X_{\gamma,\mu}}(u_0,\varepsilon) \subset V_\mu$ such that problem \eqref{qpp} admits a unique solution
$$ u(\cdot, u_1)\in H^1_{p,\mu}((0,T);X_0)\cap L_{p,\mu}((0,T); X_1) \cap C([0,T]; V_\mu),$$
for each initial value $u_1\in \bar{B}_{X_{\gamma,\mu}}(u_0,\varepsilon).$ There is a constant $c= c(u_0)>0$ such that
$$ |u(\cdot,u_1)-u(\cdot,u_2)|_{\EE_{1,\mu}(0,T)} \leq c|u_1-u_2|_{X_{\gamma,\mu}},$$
for all $u_1,u_2\in \bar{B}_{X_{\gamma,\mu}}(u_0,\varepsilon)$.
\end{theorem}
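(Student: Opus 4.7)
My approach is to run the contraction argument of \cite[Thm.~2.1]{LPW14}, with the critical nonlinearity $F_2$ now handled via Condition (S). Fix $u_0 \in V_\mu$ with $A_0 := A(u_0)$ of maximal $L_p$-regularity. For $u_1 \in \bar{B}_{X_{\gamma,\mu}}(u_0,\varepsilon) \subset V_\mu$, let $u^* \in \EE_{1,\mu}(0,T)$ be the unique solution of $\dot{u}^* + A_0 u^* = 0$, $u^*(0) = u_1$, and write $u = u^* + v$. Problem \eqref{qpp} becomes a fixed-point equation $v = \Phi(v)$ on the closed ball
$$\Sigma(T,r) := \{v \in {_0\EE}_{1,\mu}(0,T) : |v|_{\EE_{1,\mu}(0,T)} \leq r\},$$
where $\Phi(v)$ is the unique element of ${_0\EE}_{1,\mu}(0,T)$ solving $\dot{w} + A_0 w = G(v)$, $w(0) = 0$, with
$$G(v) := [A_0 - A(v+u^*)](v+u^*) + F_1(v+u^*) + F_2(v+u^*).$$
Maximal regularity of $A_0$ controls $|\Phi(v_1) - \Phi(v_2)|_{\EE_{1,\mu}(0,T)}$ by $M\,|G(v_1)-G(v_2)|_{L_{p,\mu}((0,T);X_0)}$ with $M$ independent of $T \leq T_0$.

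The $A$- and $F_1$-contributions to $G(v_1)-G(v_2)$ are handled verbatim as in \cite{LPW14} and yield factors that are small once $T, r, \varepsilon$ are small. The novelty lies in the $F_2$-estimate, which by (H2) reduces to bounding, for each $j$,
$$J_j := \bigl|(1 + |v_1+u^*|_{X_\beta}^{\rho_j} + |v_2+u^*|_{X_\beta}^{\rho_j})\,|v_1-v_2|_{X_{\beta_j}}\bigr|_{L_{p,\mu}((0,T);\RR)}.$$
I would apply a weighted H\"older inequality with $\tfrac{1}{p} = \tfrac{\rho_j}{q_\beta} + \tfrac{1}{r_j}$, where the exponents $q_\sigma$ are fixed by the chain
$$\EE_{1,\mu}(0,T) \hookrightarrow {_0H}^{1-\sigma}_{p,\mu}((0,T);X_\sigma) \hookrightarrow L_{q_\sigma,\mu}((0,T);X_\sigma), \qquad \sigma \in \{\beta,\beta_j\},$$
the first embedding being the weighted version of Condition (S) proved in the remark, and the second a weighted-time Sobolev embedding with constant independent of $T$ (after the zero-trace extension by $0$ to $\RR_-$). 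Equality in (H3) is precisely the scaling identity that makes the H\"older exponents match, so no $T^\delta$-factor with $\delta > 0$ is produced.

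Since the embedding constants are $T$-independent, smallness has to come from the radius and the initial data. For $v_i \in \Sigma(T,r)$ the zero initial trace yields $|v_i|_{L_{q_\beta,\mu}((0,T);X_\beta)} \leq Cr$ uniformly in $T$. For the reference solution I would split $u^* = e^{-tA_0}u_0 + e^{-tA_0}(u_1 - u_0)$: the second summand has norm $\leq C\varepsilon$ in $L_{q_\beta,\mu}((0,T);X_\beta)$ uniformly in $T$, while the first belongs to $L_{q_\beta,\mu}(\RR_+;X_\beta)$ as a fixed function, so its restriction to $(0,T)$ tends to $0$ in norm as $T \to 0^+$ by absolute continuity of the integral. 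Choosing first $r$ and $\varepsilon$, then $T$, small enough closes both the self-mapping and strict-contraction properties of $\Phi$. Uniqueness and the Lipschitz dependence on $u_1$ follow by applying the Banach fixed-point theorem parametrically for $u_1 \in \bar{B}_{X_{\gamma,\mu}}(u_0,\varepsilon)$. The main obstacle I anticipate is the clean verification of the $T$-independent embedding $\EE_{1,\mu}(0,T) \hookrightarrow L_{q_\sigma,\mu}((0,T);X_\sigma)$, combining the extension of \cite[Lemma 2.5]{MeySchn12} with the full strength of Condition (S) on $\RR$.
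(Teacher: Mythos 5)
Your proposal follows essentially the same route as the paper: H\"older in time with exponents fixed by the criticality identity in {\bf (H3)}, $T$-independent embeddings of ${_0\EE}_{1,\mu}(0,T)$ into weighted Lebesgue spaces obtained from Condition {\bf (S)} together with Sobolev embedding and Hardy's inequality, and smallness extracted from $r$, $\varepsilon$ and the vanishing of the norm of the fixed function $e^{-tA_0}u_0$ on $(0,T)$ as $T\to 0$. The one detail to correct is that the temporal weight in the intermediate Lebesgue spaces cannot remain $\mu$: distributing $t^{(1-\mu)p}$ under H\"older forces modified weights $\sigma,\sigma'$ with $1-\mu=r(1-\sigma)=\rho_j r'(1-\sigma')$, exactly as the paper records.
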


\noindent
We call $j$ subcritical if in {\bf (H3)} strict inequality holds, and critical otherwise. As $\beta_j\leq \beta<1$, any $j$ with $\rho_j=0$ is subcritical.
Furthermore,  {\bf (H3)} is equivalent to $ \rho_j\beta+\beta_j -1\leq \rho_j(\mu-1/p)$, hence the minimal value of $\mu$ is given by
$$ \mu_{crit} = \frac{1}{p} + \beta -\min_j(1-\beta_j)/\rho_j.$$
This number defines the {\em critical weight}. Theorem \ref{main} shows that we have local well-posedness of \eqref{qpp} for initial values in
the space $X_{\gamma,\mu_{crit}}$, provided {\bf (H1)} holds for $\mu=\mu_{crit}$. Therefore it makes sense to name this space the {\em critical space} for \eqref{qpp}.

The main difference of Theorem \ref{main} to our previous result, Theorem 2.1 in \cite{LPW14}, is that here we may allow for equality in Condition {\bf (H3)}, at the expense of assuming {\bf (S)}.
In the applications presented in \cite{LPW14}, there was no need for this equality, as strict inequality had to be imposed to ensure {\bf (H1)}. But meanwhile we realized that equality in {\bf (H3)} is an important issue. To demonstrate this, we use Theorem \ref{main} to study the Navier-Stokes equations, and refer also to the recent paper Pr\"uss \cite{Pru16} for an application to the quasi-geostrophic equations on compact surfaces without boundary in $\RR^3$. We mention that the proofs of the remaining results in \cite{LPW14} remain valid without any changes.

\section{Proof of the Main Result}
We show how to extend the proof of Theorem 2.1 in \cite{LPW14} to the case of equality in {\bf (H3)}. For this purpose we fix any critical index $j$, i.e.
$$\rho_j(\beta-(\mu-1/p)) + \beta_j -(\mu-1/p) = 1 -(\mu-1/p),$$
and set
$$ \frac{1}{r} =\frac{\beta_j-(\mu-1/p)}{ 1-(\mu-1/p)},\quad \frac{1}{r^\prime} = \rho_j\frac{\beta-(\mu-1/p)}{1-(\mu-1/p)}.$$
Then we have $1/r<1$, $1/r^\prime <\rho_j$, and $1/r+1/r^\prime =1$. Using the notation in the proof of Theorem 2.1 in \cite{LPW14}, we start with equation (2.12) in \cite{LPW14}:
\begin{multline*}
|F_2(v)-F_2(u_0^*)|_{\mathbb{E}_{0,\mu}(0,T)}\\
\le C_{\varepsilon_0}\sum_{j=1}^m\left(\int_0^T
(1+|v(t)|_{X_{\beta}}^{\rho_j}+|u_0^*(t)|_{X_{\beta}}^{\rho_j})^p
|v(t)-u_0^*(t)|_{X_{\beta_j}}^p
t^{(1-\mu)p}dt\right)^{1/p}.
\end{multline*}
We apply H\"olders inequality to the result
\begin{multline*}
|F_2(v)-F_2(u_0^*)|_{\mathbb{E}_{0,\mu}(0,T)}\\
\le C_{\varepsilon_0}\sum_{j=1}^m\left(
(\kappa(T)+|v|_{L_{\rho_j pr',\sigma'}(X_\beta)}^{\rho_j}+|u_0^*|_{L_{\rho_j pr',\sigma'}(X_\beta)}^{\rho_j})
|v-u_0^*|_{L_{pr,\sigma}(X_{\beta_j})}\right),
\end{multline*}
where 
$$\kappa(T):=\frac{1}{(p(1-\mu)+1)^{1/(pr')}}T^{(p(1-\mu)+1)/(pr')}\to 0,$$
as $T\to 0$ and $1-\mu = r(1-\sigma) = \rho_j r^\prime(1-\sigma^\prime)$. Note that $\sigma,\sigma^\prime$ are admissible, as
$$ \sigma = 1-\frac{1}{r} + \frac{\mu}{r}> \frac{1}{pr},\quad \sigma^\prime = 1-\frac{1}{\rho_j r^\prime} +\frac{\mu}{\rho_j r^\prime} > \frac{1}{ \rho_j p r^\prime}.$$
Next we have by Condition {\bf (S)}, Sobolev embedding and Hardy's inequality
\begin{multline*} 
{_0 \EE}_{1,\mu}(0,T) \hookrightarrow {_0H}^{1-\beta_j}_{p,\mu}((0,T); X_{\beta_j})\hookrightarrow {_0H}^{1-\beta_j-\frac{1}{pr'}}_{pr,\mu}((0,T); X_{\beta_j})\\
\hookrightarrow L_{pr,\sigma}((0,T); X_{\beta_j}),
\end{multline*}
as $1/r+1/r'=1$ and
$$ 1-\beta_j -\frac{1}{p} -(1-\mu) = - \frac{1}{pr} -(1-\sigma),$$
see e.g.\ Pr\"uss and Simonett \cite{PrSi16}, Sections 4.5.5 and 3.4.6 or Meyries and Schnaubelt \cite{MeySchn12}. We emphasize that the embedding constants do not depend on $T>0$.  In the same way we obtain the embedding
$$ {_0 \EE}_{1,\mu}(0,T) \hookrightarrow {_0H}^{1-\beta}_{p,\mu}((0,T); X_{\beta})\hookrightarrow L_{\rho_jpr^\prime,\sigma^\prime}((0,T); X_{\beta}),$$
as
$$ 1-\beta -\frac{1}{p} -(1-\mu) = - \frac{1}{\rho_jpr^\prime} -(1-\sigma^\prime) .$$
The triangle inequality first yields
$$|v|_{L_{\rho_j pr',\sigma'}(X_\beta)}\le |v-u_1^*|_{L_{\rho_j pr',\sigma'}(X_\beta)}+|u_1^*|_{L_{\rho_j pr',\sigma'}(X_\beta)},$$
where $u_1^*(t)= e^{-A_0t} u_1$. This implies with $v(0)=u_1$ the estimates
\begin{multline*}
|v-u_1^*|_{L_{\rho_j pr',\sigma'}(X_\beta)}\le C|v-u_1^*|_{_0\mathbb{E}_{1,\mu}}\\
\le C(|v-u_0^*|_{\mathbb{E}_{1,\mu}}
+|u_0^*-u_1^*|_{\mathbb{E}_{1,\mu}})\le C(r+|u_0-u_1|_{X_{\gamma,\mu}})
\end{multline*}
and, by Proposition 3.4.3 of Pr\"uss and Simonett \cite{PrSi16},
\begin{multline*}
|u_1^*|_{L_{\rho_j pr',\sigma'}(X_\beta)}\le |u_0^*-u_1^*|_{L_{\rho_j pr',\sigma'}(X_\beta)}+|u_0^*|_{L_{\rho_j pr',\sigma'}(X_\beta)}\\
\le C(|u_0-u_1|_{X_{\gamma,\mu}}+\varphi(T)),
\end{multline*}
with $\varphi(T) = |u_0^*|_{L_{\rho_jpr^\prime,\sigma^\prime}((0,T); X_{\beta})}$.
Moreover, it holds that
\begin{multline*}
|v-u_0^*|_{L_{pr,\sigma}(X_{\beta_j})}\le |v-u_1^*|_{L_{pr,\sigma}(X_{\beta_j})}+|u_1^*-u_0^*|_{L_{pr,\sigma}(X_{\beta_j})}\\
\le C(r+|u_0-u_1|_{X_{\gamma,\mu}}).
\end{multline*}
Therefore, choosing $T>0$, $r>0$ and $|u_0-u_1|_{X_{\gamma,\mu}}$ small enough, we obtain the estimate $|F_2(v)|_{\mathbb{E}_{0,\mu}}\le r/3$. In fact, $\varphi(T)\to 0$ for $T\to0$, as $u_0^*\in L_{\rho_jpr^\prime,\sigma^\prime}((0,T); X_{\beta})$ by Proposition 3.4.3 of Pr\"uss and Simonett \cite{PrSi16}.

A similar argument applies to the contraction estimate 
\begin{multline*}
|F_2(v_1)-F_2(v_2)|_{\mathbb{E}_{0,\mu}(0,T)}\\
\le C_{\varepsilon_0}\sum_{j=1}^m\left(
(\kappa(T)+|v_1|_{L_{\rho_j pr',\sigma'}(X_\beta)}^{\rho_j}+|v_2|_{L_{\rho_j pr',\sigma'}(X_\beta)}^{\rho_j})
|v_1-v_2|_{L_{pr,\sigma}(X_{\beta_j})}\right),
\end{multline*}
making use of $|v_i|\le |v_i-u_i^*|+|u_i^*|$ and
$$|v_1-v_2|_{L_{pr,\sigma}(X_{\beta_j})}\le |v_1-v_2-(u_1^*-u_2^*)|_{L_{pr,\sigma}(X_{\beta_j})}+|u_1^*-u_2^*|_{L_{pr,\sigma}(X_{\beta_j})}.$$

\section{Application to the Navier-Stokes Equations}
Let $\Omega\subset\RR^n$, $n\geq2$, be a bounded domain with boundary $\partial\Omega$ of class $C^{3-}$, and consider the Navier-Stokes problem
\begin{align}\label{NS}
\partial_t u +u\cdot\nabla u-\Delta u +\nabla \pi &=0,\quad \mbox{in }\; \Omega,\nn\\
{\rm div}\, u &=0,\quad \mbox{in }\; \Omega,\\
u&=0,\quad \mbox{on }\; \partial\Omega,\nn\\
u(0)&=u_0,\quad \mbox{in }\; \Omega.\nn
\end{align}
Here $u$ denotes the velocity field and $\pi$ the pressure. We consider this problem in $L_q(\Omega)^n$ with $1<q<\infty$. Employing the Helmholtz projection $P$ and the Stokes operator $A$, this problem can be reformulated as the abstract semilinear evolution equation
\begin{equation}\label{ANS}
\dot{u} + Au = F(u),\; t>0,\quad u(0)=u_0,
\end{equation}
 in the Banach space
$X_0 =L_{q,\sigma}(\Omega)=P L_q(\Omega)^n$, with bilinear nonlinearity $F$ defined by
\begin{equation}\label{F}
F(u) = G(u,u),\quad G(u_1,u_2)=-P (u_1\cdot\nabla) u_2.
\end{equation}
It is well-known, see e.g.\ Hieber and Saal \cite{HiSa16} or Amann \cite{Ama00}, that the Stokes operator $A=-P\Delta$ with domain
$$ X_1 = {\sf D}(A) := \{ u\in H^2_q(\Omega)^n\cap L_{q,\sigma}(\Omega):\; u=0 \mbox{ on } \partial\Omega\}$$
is sectorial, and admits a bounded $\cH^\infty$-calculus with $\cH^\infty$-angle equal to zero. Therefore, $A$ has maximal $L_p$-regularity, even on the halfline, as 0 belongs to the resolvent set of $A$, and so $-A$ generates an exponentially stable analytic  $C_0$-semigroup in $X_0$. Thus with $A(u)=A$, $F_1=0$, and $F_2=F$, problem \eqref{ANS} is of the form \eqref{qpp}, and Conditions {\bf (H1)} and {\bf (S)} are valid. To apply Theorem \ref{main} we therefore have to estimate the nonlinearity in a proper way.
This will be done as follows. As $P$ is bounded in $L_q(\Omega)^n$, by H\"older's inequality we obtain
$$ |G(u_1,u_2)|_{X_0} \leq C |u_1\cdot\nabla u_2|_{L_q}\leq C |u_1|_{L_{qr^\prime}}|u_2|_{H^1_{qr}},$$
where $r,r^\prime>1$ and $1/r+1/r^\prime=1$. We choose $r$ in such a way that the Sobolev indices of the spaces $L_{qr^\prime}(\Omega)$ and $H^1_{qr}(\Omega)$ are equal, which means
$$ 1-\frac{n}{qr} = -\frac{n}{qr^\prime},\quad \mbox{equivalently}\quad \frac{n}{qr} = \frac{1}{2}(1+\frac{n}{q}).$$
This is feasible if $q\in (1,n)$, we assume this in the sequel. Next we employ Sobolev embeddings to obtain
$$ X_\beta\subset H^{2\beta}_q(\Omega)^n\hookrightarrow L_{qr^\prime}(\Omega)^n\cap H^1_{qr}(\Omega)^n.$$
This requires for the Sobolev index $2\beta-n/q$ of $H^{2\beta}_q(\Omega)$
$$ 1-\frac{n}{qr} = 2\beta-\frac{n}{q},\quad \mbox{i.e.} \quad \beta = \frac{1}{4}( 1+ \frac{n}{q}).$$
The condition $\beta<1$ is equivalent to $n/q<3$, we assume this below. To meet Conditions {\bf (H2), (H3)}, we set $m=1$, $\rho_1=1$, $\beta_1=\beta$. Then {\bf (H3)} requires
$$ 2\beta \leq 1+\mu-1/p, $$
hence the optimal choice $\mu=\mu_{crit}$ is
$$ \mu_{crit} -\frac{1}{p} = 2\beta-1 = \frac{1}{2}(\frac{n}{q}-1).$$
Finally, the constraint $\mu\leq 1$ requires that $p$ should be chosen large enough, to be subject to the condition
$$ \frac{2}{p} +\frac{n}{q}\leq 3.$$
Computing the space of admissible initial values then leads to
$$ X_{\gamma,\mu} = {_0B}^{n/q-1}_{qp}(\Omega)^n\cap L_{q,\sigma}(\Omega).$$
Applying Theorem \ref{main}, this yields the following result on local well-posedness of the Navier-Stokes system \eqref{NS} for initial values in these critical spaces.

\begin{theorem}
Let $q\in (1,n)$, $p\in (1,\infty)$ be such that $2/p+n/q\leq3$, and  suppose that $\Omega\subset\RR^n$ is bounded domain of class $C^{3-}$.

Then, for each initial value $u_0\in {_0B}^{n/q-1}_{qp}(\Omega)^n\cap L_{q,\sigma}(\Omega)$, the Navier-Stokes problem \eqref{NS} admits a unique strong solution $u$ in the class
$$ u \in H^1_{p,\mu}((0,T); L_{q,\sigma}(\Omega))\cap L_{p,\mu}((0,T); H^2_q(\Omega)^n),$$
for some $T=T(u_0)>0$, with $\mu = 1/p+ n/2q -1/2$. The solution exists on a maximal interval $(0,t_+(u_0))$ and depends continuously on $u_0$.
In addition, we have
$$ u \in C([0,t_+); B^{n/q-1}_{qp}(\Omega)^n\cap L_{q,\sigma}(\Omega))\cap C((0,t_+); B^{2(1-1/p)}_{qp}(\Omega)^n),
$$
i.e.\ it regularizes instantly if $2/p +n/q<3$.
\end{theorem}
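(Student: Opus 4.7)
My plan is to recast \eqref{NS} in the abstract form \eqref{qpp} so that Theorem \ref{main} produces a local solution, identify the initial-data space with the one in the statement, and then extend to a maximal interval and read off the instantaneous regularization by a time-shift argument. I would take $X_0=L_{q,\sigma}(\Omega)$, $X_1={\sf D}(A)$ with the Stokes operator $A=-P\Delta$, $A(u)\equiv A$, $F_1\equiv 0$, and $F_2(u)=-P(u\cdot\nabla)u$. The cited properties of $A$---sectoriality with $\cH^\infty$-angle zero, maximal $L_p$-regularity, and $0$ in the resolvent set---make {\bf (H1)} trivial, secure the assumption on $A_0=A(u_0)$, and, via the mixed-derivative theorem recalled in the first remark, imply condition {\bf (S)}; thus the entire burden falls on {\bf (H2)} and {\bf (H3)} for $F_2$.

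Since $F_2(u)=G(u,u)$ with $G$ bilinear and $P$ bounded on $L_q$, H\"older's inequality yields
$$|G(u_1,u_2)|_{X_0}\le C|u_1|_{L_{qr'}}|u_2|_{H^1_{qr}}$$
for any conjugate pair $r,r'>1$. I would choose $r$ so that the Sobolev indices of $L_{qr'}$ and $H^1_{qr}$ agree, which forces $n/(qr)=(1+n/q)/2$ and is admissible exactly when $q<n$. Setting $\beta=(1+n/q)/4$, the bounded $\cH^\infty$-calculus of $A$ gives $X_\beta\hookrightarrow H^{2\beta}_q(\Omega)^n$ and thus, by Sobolev embedding, $X_\beta\hookrightarrow L_{qr'}(\Omega)^n\cap H^1_{qr}(\Omega)^n$; the restriction $\beta<1$ is $n/q<3$. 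The bilinear split $G(u,u)-G(v,v)=G(u-v,u)+G(v,u-v)$ then produces the Lipschitz estimate of {\bf (H2)} with $m=1$, $\rho_1=1$, $\beta_1=\beta$, and {\bf (H3)} becomes $2\beta\le 1+\mu-1/p$ with critical value $\mu_{crit}=1/p+(n/q-1)/2$; the constraint $\mu_{crit}\le 1$ is the hypothesis $2/p+n/q\le 3$.

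Theorem \ref{main} now applies at $\mu=\mu_{crit}$ and delivers the asserted unique solution on some $[0,T]$ together with the Lipschitz dependence. The space of initial data is
$$X_{\gamma,\mu}=(L_{q,\sigma}(\Omega),{\sf D}(A))_{(n/q-1)/2,\,p},$$
and the standard interpolation-theoretic description of the fractional-power scale of the Stokes operator on domains (Amann \cite{Ama00}) identifies this with ${_0B}^{n/q-1}_{qp}(\Omega)^n\cap L_{q,\sigma}(\Omega)$, matching the statement of the theorem.

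Extension to the maximal interval $(0,t_+(u_0))$ is by the standard continuation argument, restarting Theorem \ref{main} at later times. When $2/p+n/q<3$ we have $\mu<1$, and the weighted-time embedding ${_0\mathbb{E}}_{1,\mu}(0,T)\hookrightarrow C((0,T];X_\gamma)$ places $u(t_0)$ in $X_\gamma=(X_0,X_1)_{1-1/p,p}=B^{2(1-1/p)}_{qp}(\Omega)^n\cap L_{q,\sigma}(\Omega)$ for every $t_0\in(0,t_+)$; restarting from $t_0$ with $\mu=1$ propagates this regularity throughout $(0,t_+)$ and yields the claimed instant smoothing. The main delicate point is the identification of $X_\beta$ with a subspace of $H^{2\beta}_q$ and of $X_{\gamma,\mu}$ with the stated Besov-type space respecting the boundary condition, both of which I would import from the Stokes literature rather than rederive.
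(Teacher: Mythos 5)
Your proposal is correct and follows essentially the same route as the paper: the same choice of $X_0,X_1$, the same H\"older/Sobolev balancing leading to $\beta=(1+n/q)/4$, the same verification of {\bf (H2)}, {\bf (H3)} with $m=1$, $\rho_1=1$, $\beta_1=\beta$, the same identification of $X_{\gamma,\mu_{crit}}$ with ${_0B}^{n/q-1}_{qp}(\Omega)^n\cap L_{q,\sigma}(\Omega)$, and the standard continuation and instantaneous-regularization arguments from the weighted maximal-regularity framework. You merely make explicit a few steps the paper leaves implicit (the bilinear splitting of $G$ and the time-shift argument for smoothing), which is fine.
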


\noindent
It is an easy consequence of this result that we also have global existence for initial values which are small in one of the critical spaces.

\begin{corollary} Let the assumptions of Theorem 3.1 be valid. Then there exists $r>0$ such that the solution from Theorem 3.1 exists globally, provided $|u_0|_{B^{n/q-1}_{qp}}<r.$
\end{corollary}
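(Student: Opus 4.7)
My plan is to construct the global solution directly on the half-line by a Banach fixed-point argument in the critical weight $\mu = \mu_{crit} = 1/p + n/(2q) - 1/2$. The key structural input is that $-A$ generates an exponentially stable analytic $C_0$-semigroup on $X_0 = L_{q,\sigma}(\Omega)$, so $A$ has maximal $L_p$-regularity on the entire half-line $\mathbb{R}_+$ and $0$ lies in its resolvent set. Since the embedding constants in the chain of Section~2 are $T$-independent, Condition {\bf (S)}, Sobolev embedding, and Hardy's inequality furnish the half-line embedding
$$\mathbb{E}_{1,\mu}(\mathbb{R}_+) \hookrightarrow L_{2p,\sigma}(\mathbb{R}_+; X_\beta),$$
which (together with the H\"older-conjugate copy that coincides with it here, since $\rho_1=1$ and $\beta_1=\beta$) is all that is needed to handle the bilinear nonlinearity.

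I would decompose $u = u_0^* + v$ with $u_0^*(t) = e^{-tA}u_0$, and search for $v \in {_0\mathbb{E}}_{1,\mu}(\mathbb{R}_+)$ solving $\dot v + Av = F(v+u_0^*)$, $v(0) = 0$. The trace theorem gives $|u_0^*|_{\mathbb{E}_{1,\mu}(\mathbb{R}_+)} \le C|u_0|_{X_{\gamma,\mu}}$, and maximal regularity furnishes a bounded right inverse $S:\mathbb{E}_{0,\mu}(\mathbb{R}_+)\to {_0\mathbb{E}}_{1,\mu}(\mathbb{R}_+)$ of $\partial_t+A$. The bilinear H\"older estimate of Section~3, combined with the half-line embedding above and the identity $F(u_1)-F(u_2)=G(u_1-u_2,u_1)+G(u_2,u_1-u_2)$, yields
$$|F(v + u_0^*)|_{\mathbb{E}_{0,\mu}(\mathbb{R}_+)} \le C\bigl(|v|_{{_0\mathbb{E}}_{1,\mu}(\mathbb{R}_+)} + |u_0|_{X_{\gamma,\mu}}\bigr)^2,$$
together with the analogous Lipschitz estimate carrying the prefactor $C(|v_1|+|v_2|+|u_0|)$ for $F(v_1+u_0^*)-F(v_2+u_0^*)$.

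Consequently the map $v \mapsto S(F(v+u_0^*))$ is self-mapping and strictly contractive on the ball $\{v\in {_0\mathbb{E}}_{1,\mu}(\mathbb{R}_+): |v| \le R\}$ as soon as $R$ and $|u_0|_{X_{\gamma,\mu}}$ satisfy $C(R+|u_0|)^2 \le R$ and $C(R+|u_0|) < 1$, which is possible whenever $|u_0|_{X_{\gamma,\mu}} < r$ for a sufficiently small $r>0$. Banach's theorem then produces a unique global $v$, hence a global solution $u = v + u_0^* \in \mathbb{E}_{1,\mu}(\mathbb{R}_+)$, and by the uniqueness assertion of Theorem~3.1 this solution agrees on $[0,T]$ with the local one, forcing $t_+(u_0) = \infty$. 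The one delicate point is the half-line validity of the embeddings with $T$-independent constants; this is established exactly as in the Remark after {\bf (S)}, but on $\mathbb{R}_+$, using the exponential decay of $e^{-tA}$ and Proposition 3.4.3 of \cite{PrSi16}.
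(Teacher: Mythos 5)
Your fixed-point construction on the half-line is sound, but it is a genuinely different argument from the one in the paper. The paper never re-solves the equation globally: it takes the local solution $u$ already furnished by Theorem 3.1, sets $v(t)=u(t)-e^{-At}u_0$, and uses maximal regularity together with the bilinear estimate of Section 3 and the $T$-independent embeddings of Section 2 to derive the a priori inequality
$$|v|_{\mathbb{E}_{1,\mu}(0,T)}\le C_1|u_0|_{X_{\gamma,\mu}}^2+C_2|v|_{\mathbb{E}_{1,\mu}(0,T)}^2$$
for every $T<t_+(u_0)$, with $C_1,C_2$ independent of $T$ and $u_0$. Since $T\mapsto|v|_{\mathbb{E}_{1,\mu}(0,T)}$ is continuous, nondecreasing and vanishes at $T=0$, while for $4C_1C_2|u_0|_{X_{\gamma,\mu}}^2<1$ the set of nonnegative reals compatible with this inequality is disconnected, the norm stays below the smaller root of the quadratic on all of $[0,t_+)$; the standard continuation criterion (finite $\mathbb{E}_{1,\mu}$-norm up to $t_+$ forces $t_+=\infty$) then gives global existence with $r=1/(2\sqrt{C_1C_2})$. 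Your route instead constructs a global solution from scratch by contraction in ${_0\mathbb{E}}_{1,\mu}(\mathbb{R}_+)$ and identifies it with the maximal solution by uniqueness. That is legitimate here, because $0\in\rho(A)$ yields maximal regularity on the half-line and the critical embeddings carry $T$-independent constants, hence persist on $\mathbb{R}_+$; the price is that you must verify all these half-line ingredients (in particular $|e^{-\cdot A}u_0|_{L_{2p,\sigma}(\mathbb{R}_+;X_\beta)}\le C|u_0|_{X_{\gamma,\mu}}$, which on the half-line genuinely uses the exponential stability of the Stokes semigroup) and then add the identification step, whereas the paper's a priori-bound argument recycles only the finite-interval estimates already established and needs nothing beyond the abstract continuation criterion. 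What your version buys in exchange is a self-contained derivation of $u\in\mathbb{E}_{1,\mu}(\mathbb{R}_+)$ that avoids the continuity-of-the-norm bootstrap.
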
 
\begin{proof}
By the estimate of $F(u)$ given above, it is easy to show via maximal regularity that $v(t) =u(t) -e^{-At}u_0$ satisfies
$$ |v|_{\EE_{1,\mu}(0,T)} \leq C_1 |u_0|_{X_{\gamma,\mu}}^2 + C_2 |v|_{\EE_{1,\mu}(0,T)}^2,$$
for each $T<t_+(u_0)$. Here $C_1,C_2>0$ are constants independent of $u_0$ and $T$. This inequality implies boundedness of $|v|_{\EE_{1,\mu}(0,T)}$ on $[0,t_+)$,
hence global existence, provided $|u_0|_{X_{\gamma,\mu}}< r:=1/2\sqrt{C_1C_2}$.
\end{proof}

\begin{remark}\mbox{}
\begin{enumerate}
\item Consider the particular case $n=3$, $p=q=2$. Then we have
$$ X_{\gamma,\mu} = {_0H}^{1/2}_2(\Omega)^3\cap L_{2,\sigma}(\Omega),\quad X_{\gamma} = {_0H}^{1}_2(\Omega)^3\cap L_{2,\sigma}(\Omega),$$
which yields the celebrated Fujita-Kato theorem, proved first in 1962 by means of the famous Fujita-Kato iteration, see \cite{FuKa62}.
\item In the general case, observe that the Sobolev index of the spaces $B^{n/q-1}_{qp}(\Omega)$ equals $-1$, it is independent of $q$. These are the critical spaces for the Navier-Stokes equations in $nD$, their homogeneous versions are known to be scaling invariant in $\Omega=\RR^n$. We refer to Cannone \cite{Can97} for the first results in this direction.


\end{enumerate}
\end{remark}

\medskip

\noindent
In a forthcoming paper, we will extend the range of $q$ to $[n,\infty)$. Thus, by the embeddings 
$$B^{n/q_1-1}_{q_1,p_1}(\Omega)\hookrightarrow B^{n/q_2-1}_{q_2,p_2}(\Omega),\quad 1\leq q_1<q_2<\infty, \; p_1,p_2\in[1,\infty]$$
and $$B^{s}_{q,p_1}(\Omega)\hookrightarrow B^{s}_{q,p_2}(\Omega),\quad 1\leq p_1\leq p_2\leq\infty,$$
and by maximal $L_p$-regularity, this will cover the range $1\leq q<\infty$, $1\leq p\leq\infty$.

\begin{small}

\end{small}
\end{document}